\documentclass[a4paper,11pt,reqno]{amsart}

\usepackage{xcolor}

\usepackage{amsmath,amsthm,array,xypic}
\usepackage{amssymb}
\usepackage{amsrefs}
\usepackage{mathtools}
\usepackage{enumerate}
\usepackage[mathcal]{euscript}
\usepackage[margin=3cm]{geometry}

\usepackage[utf8]{inputenc}
\usepackage{tgtermes}
\usepackage{mathptmx}
\usepackage[T1]{fontenc}
\usepackage{stackrel}
\usepackage{pict2e,picture}
\usepackage{hyperref}

\newtheorem{theorem}{Theorem}[section]
\newtheorem{lemma}[theorem]{Lemma}
\newtheorem{proposition}[theorem]{Proposition}

\theoremstyle{definition}

\newtheorem{remark}[theorem]{Remark}

\newcommand{\CC}{\mathbb{C}}

\newcommand{\HH}{\operatorname{HH}}
\newcommand{\Hom}{\operatorname{Hom}}

\newcommand{\Mat}{\operatorname{Mat}}

\newcommand{\op}{{\operatorname{o}}}

\newcommand{\Ext}{\operatorname{Ext}}

\newcommand{\id}{\operatorname{id}}

\renewcommand{\d}{\operatorname{{\partial}}}

\newcommand{\leftbimod}[3]{\vphantom{#1}^{#2}{\kern-#3pt #1}}

\newcommand{\<}{\<}
\renewcommand{\>}{\>}

\newcommand{\lA}{\widetilde{A}}
\newcommand{\lf}{\widetilde{f}}

\numberwithin{equation}{subsection}

\title{Hochschild cohomology and lifts of endomorphisms}

\author{Niels Lauritzen and Jesper Funch Thomsen}
\address{Department of Mathematics,
Aarhus University,
Ny Munkegade 118,
DK-8000 Aarhus C,
Denmark }
\email{niels@math.au.dk, funch@math.au.dk}

\begin{document}
\maketitle

\begin{abstract}

We study when algebra endomorphisms can be lifted to first--order flat lifts.  
To a first--order flat lift of an algebra and an endomorphism, we associate a 
canonical class in Hochschild cohomology with coefficients in a
naturally twisted bimodule. The cohomology class 
vanishes
exactly when the endomorphism admits a multiplicative lift.

For an Azumaya algebra of constant rank over a formally smooth center, we prove that 
an endomorphism lifts if and only if the 
induced endomorphism of the center preserves the Poisson structure given 
by the lift of the algebra.
\end{abstract}

\section{Introduction}

A fundamental theme in deformation quantization is the interplay between
a noncommutative algebra and the Poisson geometry of its semiclassical limit.
In \cite{BK}, Belov-Kanel and Kontsevich
conjectured that the automorphism group of the $n$-th Weyl algebra $A_n(\CC)$ 
is canonically isomorphic to the group of polynomial symplectomorphisms 
of $\CC^{2n}$, equipped with its standard Poisson structure.
Their key technique proceeds by 
reducing the Weyl algebra to positive characteristic $p$, 
where $A_n(k)$ becomes an Azumaya algebra whose center 
$Z \cong k[x_1^p, \ldots, x_n^p, \d_1^p, \ldots, \d_n^p]$ 
is a polynomial algebra. Here $A_n(k)$ admits 
a canonical (flat) lift to $A_n(W_2(k))$ --- the 
Weyl algebra over the Witt vectors $W_2(k)$  of length two. The existence of a lift of 
a Weyl algebra endomorphism $f$ of $A_n(k)$ to $A_n(W_2(k))$ shows that the
$f$ is a symplectomorphism when restricted to the center.
Belov-Kanel and Kontsevich exploit this structure to construct a homomorphism from the automorphism 
group of the Weyl algebra to the group of Poisson automorphisms of the center.
The same circle of ideas, developed independently by Tsuchimoto~\cite{Ts}, 
also establishes the stable equivalence of the Jacobian and Dixmier conjectures~\cite{BKJ}.

In \cite{LTL} we work entirely over a perfect positive characteristic field $k$ and show that 
a $k$ -algebra endorphism of $A_n(k)$ admits a lift to $A_n(W_2(k))$ 
if and only if it preserves the canonical Poisson bracket on the center.
 The goal of this paper is to generalize that result in the framework of 
 \emph{Hochschild cohomology}.

We work in the following general setup. 
For any field $k$ and any $k$-algebra $A$, an endomorphism $f$ of $A$ together with a first-order flat lift of $A$ determines a canonical class in Hochschild cohomology of the bimodule $A$
twisted by $f$ from the left and right; after choosing an $R$-linear lift of $f$, this class is represented by a natural $2$-cocycle.
This cohomology class vanishes if and only if 
the endomorphism lifts to an endomorphism of the lift of $A$. 
Our main observation is that if $f$ preserves the center and the 
Poisson bracket on the center given by the lift of $A$, then the restriction to the center of the associated $2$-cocycle is symmetric. 

If $A$ is an Azumaya algebra of constant rank over a formally smooth center $Z$ in this setup, we prove in Theorem \ref{theorem:main} that $f$ admits 
a lift if and only if $f$ preserves the Poisson bracket on $Z$. The condition $f(Z)\subseteq Z$ is automatic
in this case --- see Proposition \ref{proposition:Azumaya} for the general statement.

We thank M.~B\"okstedt for patiently answering our questions about Hochschild cohomology.

\section{Notation and preliminaries}

In the following, $k$ will denote a field and $A$ a $k$-algebra. 
We denote by $A^\op$ the opposite algebra of $A$
and put $A^e = A\otimes_k A^\op$. The bar complex of $A$ is the complex $(B_n(A), d_n)$ of 
free $A^e$-modules given by
$B_n(A) = A^{\otimes_k (n+2)}$ and
$$
d_n(a_0 \otimes a_1 \otimes \cdots \otimes a_n\otimes a_{n+1}) =
\sum_{i=0}^n (-1)^i a_0\otimes a_1 \otimes \cdots \otimes a_i a_{i+1} \otimes \cdots \otimes a_{n+1}
$$
for $n\geq 0$. It is a free resolution of $A$ by the multiplication map $B_0(A) \rightarrow A$. For an
$A$-bimodule $M$, there is a canonical $k$-isomorphism 
\begin{equation}\label{HHExt}
\Hom_{A^e}(B_n(A), M) \rightarrow  \Hom_k(A^{\otimes_k n}, M) =: C^n(A/k, M)
\end{equation}
given by $h\mapsto (a_1\otimes \cdots \otimes a_n \mapsto h(1\otimes a_1 \otimes \cdots \otimes a_n\otimes 1))$ for $n\geq 0$.
The complex $C^\bullet(A/k, M)$ is called 
the \emph{Hochschild complex} of $M$. It has differentials given by
\begin{align*}
(\delta^n f)(a_1 \otimes \cdots \otimes a_{n+1}) &= a_1 \cdot f(a_2 \otimes \cdots \otimes a_{n+1}) \\
&\quad + \sum_{i=1}^{n} (-1)^i f(a_1 \otimes \cdots \otimes a_i a_{i+1} \otimes \cdots \otimes a_{n+1}) \\
&\quad + (-1)^{n+1} f(a_1 \otimes \cdots \otimes a_n) \cdot a_{n+1}
\end{align*}
for $n\geq 0$.
The $n$-th Hochschild cohomology of $A$ with coefficients in $M$ is defined as
$$
\HH^n(A/k,M) := H^n(C^\bullet(A/k,M))
$$
for $n \geq 0$. The isomorphisms \eqref{HHExt} is a morphism of complexes and we have the following result.

\begin{proposition}
$$
\HH^i(A/k, M) = \Ext^i_{A^e}(A, M).
$$
\end{proposition}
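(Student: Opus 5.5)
The plan is to compute $\Ext^i_{A^e}(A,M)$ with the bar resolution $B_\bullet(A)\to A$ and then transport the resulting complex to $C^\bullet(A/k,M)$ along \eqref{HHExt}. There are three steps.

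\textbf{Step 1: $B_\bullet(A)$ is a free resolution of $A$ over $A^e$.} As an $A$-bimodule, $B_n(A)=A\otimes_k A^{\otimes_k n}\otimes_k A$ is isomorphic to $A^e\otimes_k A^{\otimes_k n}$. Since $A^{\otimes_k n}$ is a free $k$-module ($k$ is a field), $B_n(A)$ is a free $A^e$-module. For exactness of the augmented complex $\cdots\to B_1(A)\to B_0(A)\to A\to 0$ I would write down the contracting homotopy
$$
s(a_0\otimes\cdots\otimes a_{n+1})=1\otimes a_0\otimes\cdots\otimes a_{n+1},
$$
together with $s(a)=1\otimes a$ on $A$. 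This map is only left $k$-linear, not $A^e$-linear, but that is harmless: a $k$-linear contracting homotopy already shows that the complex is acyclic. The identity $ds+sd=\id$ is a telescoping check. The first face of $d\circ s$ returns the original element, and the remaining faces cancel against $s\circ d$ with opposite sign. The paper already asserts this step, so I can cite it.

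\textbf{Step 2: the cochain isomorphism.} By definition of Ext, $\Ext^i_{A^e}(A,M)$ is the $i$-th cohomology of $\Hom_{A^e}(B_\bullet(A),M)$ with differential $h\mapsto h\circ d_{n+1}$. Freeness gives
$$
\Hom_{A^e}(A^e\otimes_k A^{\otimes n},M)\cong \Hom_k(A^{\otimes n},M).
$$
This is precisely \eqref{HHExt}, since an $A^e$-linear $h$ is determined by its values $h(1\otimes a_1\otimes\cdots\otimes a_n\otimes 1)$.

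\textbf{Step 3: compatibility with differentials.} This is the only real computation, so I expect it to be the main (if mild) obstacle. For $h$ corresponding to $f\in C^n(A/k,M)$, I evaluate $h\circ d_{n+1}$ on $1\otimes a_1\otimes\cdots\otimes a_{n+1}\otimes 1$.
\begin{itemize}
\item The $i=0$ term gives $h(a_1\otimes a_2\otimes\cdots\otimes 1)=a_1\cdot f(a_2\otimes\cdots\otimes a_{n+1})$, by left $A$-linearity.
\item The terms with $1\le i\le n$ give $(-1)^i f(\cdots\otimes a_ia_{i+1}\otimes\cdots)$.
\item The last term $i=n+1$ gives $(-1)^{n+1}f(a_1\otimes\cdots\otimes a_n)\cdot a_{n+1}$, by right linearity.
\end{itemize}
The sum is exactly $\delta^n f$. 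Hence \eqref{HHExt} is an isomorphism of complexes, and taking cohomology yields $\HH^i(A/k,M)\cong\Ext^i_{A^e}(A,M)$.

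Finally, since any two projective resolutions compute the same Ext up to canonical isomorphism, the identification does not depend on having chosen the bar resolution.
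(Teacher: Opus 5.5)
Your proposal is correct and takes the same approach as the paper. The paper only asserts that the bar complex is a free $A^e$-resolution of $A$ and that the identification \eqref{HHExt} is a morphism of complexes; your contracting homotopy and your term-by-term check of the differential supply the verifications it leaves implicit.
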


We call an $A$-bimodule $M$, 
$B$-diagonal if $b m = m b$ for every $b\in B$, where $B$ is a subalgebra of $A$.

\section{First order flat lifts of endomorphisms}

Let $R$ be a commutative local ring with non-zero principal maximal ideal $pR$ such that $p^2 = 0$, and let $k = R/pR$ be its residue field. 
The map $k \to pR$ given by $r +p R \mapsto pr$ is a well-defined isomorphism; we denote its inverse by $p^{-1}: pR \to k$.

Let $A$ be a $k$-algebra. By a first-order flat lift of $A$ we mean a flat $R$-algebra $\lA$ together with an identification
$\lA\otimes_R k \cong A$. We let $\pi:\lA\rightarrow A$ denote the reduction map, and for $a\in A$ we write $\tilde a\in \lA$ for an arbitrary lift of $a$.
Since $p^2=0$, the $\lA$-bimodule $p\lA=\ker(\pi)$ is annihilated by $p$, so its left and right $\lA$-actions factor through $A=\lA/p\lA$; hence $p\lA$ is naturally an $A$-bimodule.
Flatness ensures that multiplication by $p$ 
induces an $A$-bimodule isomorphism between $A$ and $p\lA$. Explicitly, its inverse is given by:
$$
p^{-1}: p\lA \longrightarrow A, \quad p \tilde{a} \longmapsto \pi(\tilde{a}) = a,
$$
and we denote by $\iota: A \to p\lA$ the inverse map $a \mapsto p\tilde{a}$.

\begin{remark}
Since $R$ is artinian local, every flat $R$-module is free. In particular, $\lA$ is a free $R$-module. 
In fact, if $\{e_i\}_{i \in I}$ is a $k$-basis for $A$, then $\{\tilde{e}_i\}_{i \in I}$ forms an $R$-basis for $\lA$.
\end{remark}

Now, consider a $k$-algebra homomorphism $f: A \to A$ and an $R$-linear first order lift $\lf: \lA \to \lA$ (satisfying $\pi \circ \lf = f \circ \pi$). 
We define the multiplicative defect $D: \lA \times \lA \to \lA$ by:
$$
D(x, y) = \lf(x) \lf(y) - \lf(xy).
$$
Because $p^2 = 0$ and $\lf$ is $R$-linear, the defect is invariant under adding elements of $p\lA$. 
Consequently, $D$ factors through the reduction map, yielding a well-defined map $L_{\lf}: A \otimes_k A \to p\lA$ given by:
$$
L_{\lf}(a \otimes b) := D(\tilde{a}, \tilde{b}).
$$
Thus, $\lf$ is multiplicative (and hence an $R$-algebra homomorphism) if and only if $L_{\lf}(a \otimes b) = 0$ for all $a, b \in A$.
If $\lf$ is multiplicative, then automatically $\lf(1) = 1$.

\begin{proposition}\label{proposition:cocycle}
Let $f: A \to A$ be a $k$-algebra homomorphism and $\lf: \lA \to \lA$ an $R$-linear lift of $f$. The $k$-linear map $C_{\lf} \in \Hom_k(A \otimes_k A, A)$ defined by 
$$
C_{\lf}(a \otimes b) := p^{-1} L_{\lf}(a \otimes b) 
$$ 
is a $2$-cocycle in $C^2(A/k, M)$, where $M = A$ viewed as an $A$-bimodule via the twisted action
$$
a \cdot m \cdot b := f(a) m f(b)
$$
for $m \in M$ and $a, b \in A$. 

The cohomology class $[C_{\lf}] \in H^2(A/k, M)$ is independent of the choice of the $R$-linear lift $\lf$, and $[C_{\lf}] = 0$ if and only if $f$ admits a lift to an $R$-algebra homomorphism $\widehat{f}: \lA \to \lA$.
\end{proposition}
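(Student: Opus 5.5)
The plan is to verify three things by direct computation in $\lA$, using only that $p\lA$ is an $A$-bimodule isomorphic to $A$ via $\iota$ and $p^{-1}$. The first is the cocycle identity. The second is that changing $\lf$ changes $C_{\lf}$ by a coboundary. The third is that every cochain representing the class arises from some lift, so that the class vanishes exactly when some lift is multiplicative.

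\emph{Cocycle condition.} For the twisted bimodule $M$, the Hochschild differential is
\[
(\delta^2 C)(a\otimes b\otimes c) = f(a)C(b\otimes c) - C(ab\otimes c) + C(a\otimes bc) - C(a\otimes b)f(c).
\]
Fix lifts $x,y,z$ of $a,b,c$ and expand $D$ by associativity. We have
\[
\lf(x)D(y,z) - D(xy,z) + D(x,yz) - D(x,y)\lf(z)
= \lf(x)\lf(y)\lf(z) - \lf(x)\lf(yz) - \lf(xy)\lf(z) + \lf(xyz) + \lf(x)\lf(yz) - \lf(xyz) - \lf(x)\lf(y)\lf(z) + \lf(xy)\lf(z),
\]
and the right-hand side telescopes to $0$. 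Since $D(\cdot,\cdot)\in p\lA$, left multiplication by $\lf(x)$ on $p\lA$ is the action of $\pi(\lf(x))=f(a)$, and similarly on the right. The product $xy$ is a lift of $ab$. Applying $p^{-1}$, which is an $A$-bimodule map, therefore gives $\delta^2 C_{\lf}=0$.

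\emph{Independence of the lift.} Two $R$-linear lifts differ by an $R$-linear map $\lf'-\lf:\lA\to p\lA$. Such a map kills $p\lA$ because $p^2=0$, so it factors as $\iota\circ h\circ\pi$ for a $k$-linear $h:A\to A$. Conversely, by the Remark ($\lA$ is $R$-free on lifts of a $k$-basis), every $k$-linear $h$ produces an $R$-linear lift $\lf+\iota h\pi$. Expanding, and using that products of two terms in $p\lA$ vanish, gives
\[
D'(x,y)-D(x,y) = \lf(x)\,\iota h(b) + \iota h(a)\,\lf(y) - \iota h(ab).
\]
After applying $p^{-1}$ this becomes $f(a)h(b) - h(ab) + h(a)f(b) = (\delta^1 h)(a\otimes b)$ in the twisted bimodule. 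So $C_{\lf'} = C_{\lf} + \delta^1 h$, and the class is well defined.

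\emph{Vanishing criterion.} If $f$ has a multiplicative lift $\widehat f$, it is in particular $R$-linear, so $[C_{\lf}] = [C_{\widehat f}] = 0$. Conversely, if $C_{\lf} = \delta^1 h'$ for some $h'$, put $h=-h'$. Then $C_{\lf+\iota h\pi} = C_{\lf} - \delta^1 h' = 0$, so $\lf+\iota h\pi$ is multiplicative and is therefore an $R$-algebra homomorphism, with $\lf(1)=1$ automatic as the paper notes.

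The only delicate points are bookkeeping. One must check that the products $\lf(x)\cdot(\text{element of } p\lA)$ really realise the $f$-twisted action. One must also check that the correspondence between $k$-linear maps $h$ and the differences of lifts is onto, which is where flatness, via $R$-freeness, is used. No step is a serious obstacle.
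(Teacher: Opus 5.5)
Your proposal is correct and follows the paper's proof essentially step for step. It uses the same telescoping cancellation to get $\delta^2 C_{\lf}=0$. It identifies $\lf'-\lf$ with $\iota\circ h\circ\pi$ to get $C_{\lf'}-C_{\lf}=\delta^1 h$. When $C_{\lf}=\delta^1 h$, it makes the same correction $\lf-\iota h\pi$. One small quibble: $\lf+\iota h\pi$ is $R$-linear automatically as a composite of $R$-linear maps, so $R$-freeness is not needed there; flatness enters only through the bimodule isomorphism $A\cong p\lA$ that makes $\iota$ and $p^{-1}$ well defined.
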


\begin{proof}
By the definition of the Hochschild differential for the twisted bimodule $M$, we have for all $a,b,c \in A$:
\begin{align*} 
(\delta^2 C_{\lf})(a \otimes b \otimes c) &= f(a) C_{\lf}(b \otimes c) - C_{\lf}(ab \otimes c) + C_{\lf}(a \otimes bc) - C_{\lf}(a \otimes b) f(c) \\
&= p^{-1} \Big( \lf(\tilde{a}) L_{\lf}(b \otimes c) - L_{\lf}(ab \otimes c) + L_{\lf}(a \otimes bc) - L_{\lf}(a \otimes b) \lf(\tilde{c}) \Big).
\end{align*}
Expanding $L_{\lf}$ via the multiplicative defect $D$, the term inside the parenthesis becomes:
\begin{align*}
&\quad\, \lf(\tilde{a}) \big( \lf(\tilde{b})\lf(\tilde{c}) - \lf(\tilde{b}\tilde{c}) \big) - \big( \lf(\tilde{a}\tilde{b})\lf(\tilde{c}) - \lf(\tilde{a}\tilde{b}\tilde{c}) \big) \\
&+ \big( \lf(\tilde{a})\lf(\tilde{b}\tilde{c}) - \lf(\tilde{a}\tilde{b}\tilde{c}) \big) - \big( \lf(\tilde{a})\lf(\tilde{b}) - \lf(\tilde{a}\tilde{b}) \big) \lf(\tilde{c}).
\end{align*}
Due to the associativity of the algebra $\lA$, all terms cancel pairwise, yielding $(\delta^2 C_{\lf})(a \otimes b \otimes c) = 0$. Thus, $C_{\lf}$ is a $2$-cocycle.

Now, suppose $\lf_1$ and $\lf_2$ are two $R$-linear lifts of $f$. Their difference $\lf_2 - \lf_1$ maps into $p\lA$ and therefore defines a $k$-linear map $h: A \to A$ via:
$$
(\lf_2 - \lf_1)(\tilde{a}) = \iota(h(a)).
$$
Direct substitution shows that
$$
C_{\lf_2}(a \otimes b) - C_{\lf_1}(a \otimes b) = f(a)h(b) - h(ab) + h(a)f(b) = (\delta^1 h)(a \otimes b).
$$
Hence, $C_{\lf_2} - C_{\lf_1}$ is a coboundary, meaning the cohomology class $[C_{\lf}]$ is independent of the lift.

Finally, if $f$ lifts to an $R$-algebra homomorphism $\widehat{f}: \lA \to \lA$, then $L_{\widehat{f}} = 0$, making $C_{\widehat{f}} = 0$ and thus $[C_{\lf}] = 0$. Conversely, suppose $[C_{\lf}] = 0$. Then $C_{\lf} = \delta^1 h$ for some $1$-cochain $h \in \Hom_k(A,A)$, meaning:
$$
C_{\lf}(a \otimes b) = f(a)h(b) - h(ab) + h(a)f(b) \quad \forall a,b \in A.
$$
We define a new $R$-linear lift $\widehat{f}: \lA \to \lA$ by correcting $\lf$ with $h$:
$$
\widehat{f}(x) := \lf(x) - \iota(h(\pi(x))).
$$
One readily verifies that $L_{\widehat{f}}(a \otimes b) = 0$ for all $a,b \in A$, proving that $\widehat{f}$ is the desired $R$-algebra homomorphism.
\end{proof}

It is well known that a first order flat lift of $A$ gives rise to a Poisson bracket on the 
center of $A$. This is very helpful in studying the cocycle defined in Proposition \ref{proposition:cocycle}.

\subsection{Restriction of the cocycle to the center}

Let $Z$ be the center of $A$. For $z_1, z_2\in Z$, 
$$
\{z_1, z_2\} = p^{-1}([\tilde{z}_1, \tilde{z}_2]) = p^{-1}(\tilde{z}_1 \tilde{z}_2 - \tilde{z}_2 \tilde{z}_1)
$$ 
defines a Poisson bracket on the center $Z$ 
of $A$.

\begin{proposition}\label{proposition:symmetric}
    Assume that $f(Z) \subseteq Z$ and let $\lf: \lA \rightarrow \lA$ be an $R$-linear lift of 
    $f$. Let $c \in C^2(Z/k,M)$ be the restriction of $C_{\lf}$ to $Z^{\otimes_k 2}$. Then $c$ is a $2$-cocycle and
    $$
    c(x \otimes y) - c(y \otimes x) =  \{f(x), f(y)\} - f(\{x, y\}) 
    $$
    for $x, y\in Z$.
    
    In particular, 
the induced endomorphism $f: Z\rightarrow Z$ preserves
    the Poisson bracket if and only if the $2$-cocycle $c\in C^2(Z, M)$
    is symmetric, i.e., 
    $c(x \otimes y) = c(y \otimes x)$ for 
    every $x, y\in Z$.
\end{proposition}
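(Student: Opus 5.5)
The plan is to verify the three assertions in turn, using only the definitions of $C_{\lf}$, $L_{\lf}$ and the bracket.

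\emph{Cocycle property.} The restriction of a Hochschild cochain to a subalgebra commutes with the differential. Concretely, $M$ is a $Z$-bimodule via the inclusion $Z\subseteq A$, that is, via $z\cdot m\cdot z' = f(z)mf(z')$. The formula for $(\delta^2 c)(x\otimes y\otimes z)$ with $x,y,z\in Z$ is literally the formula for $(\delta^2 C_{\lf})(x\otimes y\otimes z)$, since $Z$ is closed under products. By Proposition~\ref{proposition:cocycle} the latter vanishes, so $c$ is a $2$-cocycle in $C^2(Z/k,M)$. Note also that $f(Z)\subseteq Z$ makes $M$ a $Z$-diagonal bimodule. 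This is not needed for the cocycle property, but it will be relevant for interpreting symmetry.

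\emph{Antisymmetrization formula.} Fix $x,y\in Z$ with lifts $\tilde x,\tilde y$. By definition,
\[
L_{\lf}(x\otimes y)-L_{\lf}(y\otimes x) = \lf(\tilde x)\lf(\tilde y)-\lf(\tilde y)\lf(\tilde x) - \lf(\tilde x\tilde y-\tilde y\tilde x).
\]
The first two terms form the commutator $[\lf(\tilde x),\lf(\tilde y)]$. Since $\lf(\tilde x)$ is a lift of $f(x)\in Z$, this commutator equals $\iota(\{f(x),f(y)\})$ by the definition of the bracket. Here we use that the bracket is independent of the chosen lifts, which holds because adding $pa$ to a lift of a central element changes the commutator by $p[a,\tilde z]\in p^2\lA=0$ modulo terms of the form $p\cdot(\text{central element reduced})$, and these vanish.

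For the last term, $[\tilde x,\tilde y]=\iota(\{x,y\})=p\,\widetilde{\{x,y\}}$. Since $\lf$ is $R$-linear, $\lf(p\,\widetilde{\{x,y\}})=p\,\lf(\widetilde{\{x,y\}})=\iota(f(\{x,y\}))$, using that $\lf$ lifts $f$. Applying $p^{-1}$ then gives
\[
c(x\otimes y)-c(y\otimes x)=\{f(x),f(y)\}-f(\{x,y\}).
\]

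\emph{Equivalence.} The final equivalence is immediate: the right-hand side vanishes for all $x,y\in Z$ exactly when $c$ is symmetric.

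The only mildly delicate point is the bookkeeping with $p$ and $\iota$. It requires checking that $p\lf(w)=\iota(f(\pi(w)))$, which follows from $R$-linearity together with $\pi\circ\lf=f\circ\pi$, and that the bracket is well defined on $Z$, which follows from centrality in $A$. No genuine obstacle is expected.
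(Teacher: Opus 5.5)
Your proposal is correct and follows the paper's proof essentially verbatim. The cocycle property comes from restriction commuting with the Hochschild differential, and the antisymmetrization formula comes from regrouping $L_{\lf}(x\otimes y)-L_{\lf}(y\otimes x)$ into $[\lf(\tilde x),\lf(\tilde y)]-\lf([\tilde x,\tilde y])$ and applying $p^{-1}$, using $f(Z)\subseteq Z$ so that $\lf(\tilde x),\lf(\tilde y)$ are lifts of central elements. Your aside on independence of lifts is worded confusingly, but the point is right: for $z\in Z$ one has $[\tilde a,\tilde z]\in p\lA$, hence $p[\tilde a,\tilde z]\in p^2\lA=0$.
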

\begin{proof}
Since the Hochschild differential commutes with restriction along $Z\hookrightarrow A$, the restriction $c$ of $C_{\lf}$ is again a $2$-cocycle.

For $x, y\in Z$,
\begin{align*}
&c(x \otimes y) - c(y \otimes x) = \\
&p^{-1}(\lf(\tilde{x}) \lf(\tilde{y}) - 
\lf(\tilde{x}\tilde{y}) - 
\lf(\tilde{y}) \lf(\tilde{x}) +
\lf(\tilde{y}\tilde{x})) = \\
&p^{-1}(\lf(\tilde{x}) \lf(\tilde{y}) - 
\lf(\tilde{y}) \lf(\tilde{x}) -
\lf(\tilde{x}\tilde{y} - \tilde{y}\tilde{x})) =\\
&\{f(x), f(y)\} - f(\{x, y\})
\end{align*}
This shows in particular that the antisymmetrization $c(x \otimes y) - c(y \otimes x)$ depends only on $f$ and the lift $\lA$, and not on the chosen $R$-linear lift $\lf$.
\end{proof}

Under favorable conditions, symmetric cocycles have trivial cohomology class in the Hochschild cohomology.

\begin{lemma}\label{lemma:formallysmooth}
Let $Z/k$ be a commutative algebra which is formally smooth over $k$ in the category of commutative $k$-algebras, and let $M$ be a diagonal $Z$-bimodule. Then every symmetric
$2$-cocycle in the Hochschild complex $C^\bullet(Z/k, M)$ is a
coboundary.
\end{lemma}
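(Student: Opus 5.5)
The plan is to turn a symmetric $2$-cocycle into a square-zero commutative extension of $Z$ by $M$. Formal smoothness then splits that extension, and the splitting produces the required $1$-cochain.

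\textbf{Step 1: build the extension.} Let $c \in C^2(Z/k, M)$ be a symmetric $2$-cocycle. Since $M$ is diagonal, we may regard it simply as a $Z$-module. On $E = Z \oplus M$ define
$$
(z, m)(z', m') = (zz',\; zm' + m z' + c(z \otimes z')).
$$
Associativity of this product is exactly the cocycle identity
$$
z c(z' \otimes z'') - c(zz' \otimes z'') + c(z \otimes z'z'') - c(z \otimes z')z'' = 0.
$$
Commutativity follows from symmetry of $c$ together with diagonality of $M$.

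The element $(1, -c(1\otimes 1))$ is a unit. To check this, first use the cocycle identity with $z'=1$, $z = 1$: it gives $c(1\otimes z'') = c(1\otimes 1) z''$, and similarly on the other side. If one prefers to avoid this, first replace $c$ by $c - \delta^1 h$ with $h(z) = c(1\otimes1) z$ to normalize $c(1\otimes 1)=0$, so $(1,0)$ is the unit.

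The projection $E \to Z$ is then a surjection of commutative $k$-algebras whose kernel $0 \oplus M$ squares to zero. The $k$-algebra structure comes from $\lambda \mapsto (\lambda, 0)$; this is fine after normalization, using $k$-bilinearity of $c$ and $c(\lambda \otimes z) = \lambda c(1\otimes z) = 0$.

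\textbf{Step 2: split via formal smoothness.} Apply formal smoothness of $Z$ to the identity map $Z \to Z = E/(0\oplus M)$. This gives a $k$-algebra section $s : Z \to E$ of the projection. Write $s(z) = (z, -h(z))$ with $h \in \Hom_k(Z, M)$.

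\textbf{Step 3: read off the coboundary.} Multiplicativity of $s$ reads
$$
-h(zz') = -z h(z') - h(z) z' + c(z\otimes z').
$$
That is, $c(z \otimes z') = z h(z') - h(zz') + h(z) z' = (\delta^1 h)(z\otimes z')$, so $c$ is a coboundary. If we normalized in Step 1, we add back the earlier coboundary.

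\textbf{Main obstacle.} The only delicate point is the unit and $k$-algebra normalization in Step 1, needed so that $E$ genuinely lies in the category of commutative $k$-algebras where formal smoothness applies. As indicated, I would handle it by first subtracting $\delta^1$ of $z \mapsto c(1\otimes 1)z$. With $c(1\otimes 1)=0$, the cocycle identity gives $c(1\otimes z)=c(z\otimes 1)=0$ for all $z$, and then everything is routine.
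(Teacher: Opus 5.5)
Your proof is correct and follows the paper's argument: build the commutative square-zero extension $E = Z\oplus M$ from the symmetric cocycle, split it using formal smoothness, and read off the coboundary from multiplicativity of the section (your $-h$ is the paper's $\eta$). Your extra care with the unit fills in a point the paper leaves implicit: you observe that the identity of $E$ is $(1,-c(1\otimes 1))$, or alternatively normalize to $c(1\otimes 1)=0$ by subtracting the symmetric coboundary $z\otimes z'\mapsto c(1\otimes 1)zz'$.
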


\begin{proof}
We interpret $\HH^2(Z/k, M)$ in terms of square-zero extensions. Let $\phi \in \operatorname{Hom}_k(Z^{\otimes 2}, M)$ be a $2$-cocycle. 
This defines a $k$-algebra structure on the vector space direct sum $E = Z \oplus M$, with multiplication given by
$$(a, m) \cdot (b, n) = (ab, a n + m b + \phi(a \otimes b)).$$
Because $\phi$ is a Hochschild $2$-cocycle, $E$ is an associative algebra, and the projection $E \to Z$ 
is a surjective $k$-algebra homomorphism whose kernel $M$ satisfies $M^2 = 0$.

Since $\phi$ is symmetric (i.e., $\phi(a \otimes b) = \phi(b \otimes a)$) and $M$ is a diagonal $Z$-bimodule (so $am = ma$ for all $a\in Z$ and $m\in M$), 
the multiplication on $E$ is commutative. Thus, $E$ is a commutative square-zero extension of $Z$ by $M$.

By definition, the $k$-algebra $Z$ is formally smooth if and only if every extension of $Z$ by a nilpotent ideal in the category of commutative $k$-algebras splits. Therefore, there exists a $k$-algebra section $s \colon Z \to E$. This section must be of the form $s(a) = (a, \eta(a))$ for some $k$-linear map $\eta \colon Z \to M$. Preserving multiplication requires
$$(ab, \eta(ab)) = s(ab) = s(a)s(b) = (a, \eta(a))(b, \eta(b)) = (ab, a\eta(b) + \eta(a)b + \phi(a \otimes b)),$$
which rearranges to
$$\phi(a \otimes b) = \eta(ab) - a\eta(b) - \eta(a)b.$$
This is precisely the statement that $\phi = -d\eta$ in the Hochschild complex, meaning $\phi$ is a coboundary.
\end{proof}

\section{Lifts of endomorphisms of Azumaya algebras}

If $A$ is a $k$-algebra and at the same time an Azumaya algebra over its center $Z$, then the multiplication map 
$\mu: A\otimes_Z A^\op \rightarrow A$ admits an $A^e$-module splitting $s$
and $A$ is a finitely generated projective $Z$-module (see \cite[III, Th\'eor\`eme 5.1, Th\'eor\`eme 1.4]{KnusOjanguren}). 

\begin{lemma}\label{lemma:restriction}
If $A$ is an Azumaya algebra with center $Z$ and
$M$ is an $A$-bimodule, then
the restriction map $C^i(A/k, M) \rightarrow C^i(Z/k, M)$ induces an 
injective map
$$
\HH^i(A/k,M) \hookrightarrow 
\HH^i(Z/k,M)
$$
on Hochschild cohomology.
\end{lemma}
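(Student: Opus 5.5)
The plan is to realize $\HH^i(A/k,M)$ as a direct summand of $\HH^i(Z/k,M)$. The two inputs are the $A^e$-splitting $s$ of the multiplication $\mu\colon A\otimes_Z A^\op\to A$ and a base change from $Z^e=Z\otimes_k Z$ to $A^e$. The key observation is that
$$
A\otimes_Z A^\op \;\cong\; A^e\otimes_{Z^e} Z
$$
as $A^e$-modules, where $Z^e\to A^e$ is the natural inclusion and $Z^e\to Z$ is multiplication. Because $A$ is finitely generated projective over $Z$, the ring $A^e=A\otimes_k A^\op$ is flat (indeed projective) as a $Z^e$-module. Hence, for any resolution $P_\bullet\to Z$ by projective $Z^e$-modules, $A^e\otimes_{Z^e}P_\bullet$ is a resolution of $A\otimes_Z A^\op$ by projective $A^e$-modules. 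Adjunction then gives
$$
\Ext^i_{A^e}(A\otimes_Z A^\op, M)\cong \Ext^i_{Z^e}(Z,M)=\HH^i(Z/k,M),
$$
where $M$ is regarded as a $Z^e$-module by restriction. This is exactly the $Z$-bimodule structure used in $C^\bullet(Z/k,M)$; it need not be diagonal, e.g.\ for the twisted bimodule of Proposition~\ref{proposition:cocycle}.

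Next I would use the splitting. Since $\mu\circ s=\id_A$ as $A^e$-maps, the induced maps on $\Ext_{A^e}(-,M)$ satisfy $s^*\circ\mu^*=\id$ on $\Ext^i_{A^e}(A,M)=\HH^i(A/k,M)$. Therefore
$$
\mu^*\colon \HH^i(A/k,M)\to \Ext^i_{A^e}(A\otimes_Z A^\op,M)\cong \HH^i(Z/k,M)
$$
is split injective. No further property of Azumaya algebras is needed beyond the existence of $s$ and the projectivity of $A$ over $Z$ (\cite{KnusOjanguren}).

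The remaining step, and the one I expect to require the most care, is to check that this composite is the map induced by restriction of cochains $C^i(A/k,M)\to C^i(Z/k,M)$. For this I take $P_\bullet=B_\bullet(Z)$. The inclusion $Z\hookrightarrow A$ induces a $Z^e$-linear chain map $B_\bullet(Z)\to B_\bullet(A)$, and extending scalars gives an $A^e$-linear chain map
$$
A^e\otimes_{Z^e}B_\bullet(Z)\to B_\bullet(A),\qquad (a\otimes b)\otimes(z_0\otimes\cdots\otimes z_{n+1})\mapsto az_0\otimes z_1\otimes\cdots\otimes z_n\otimes z_{n+1}b .
$$
In degree $0$ this covers $\mu\colon A\otimes_Z A^\op\to A$ on augmentations. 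By the comparison theorem it therefore computes $\mu^*$ on $\Ext$. Applying $\Hom_{A^e}(-,M)$ and the adjunction $\Hom_{A^e}(A^e\otimes_{Z^e}B_n(Z),M)=\Hom_{Z^e}(B_n(Z),M)$, the resulting map $\Hom_{A^e}(B_n(A),M)\to\Hom_{Z^e}(B_n(Z),M)$ is precomposition with $B_n(Z)\hookrightarrow B_n(A)$. Under the identifications \eqref{HHExt} this is exactly the restriction of cochains from $A^{\otimes n}$ to $Z^{\otimes n}$. Hence restriction induces the injection $\HH^i(A/k,M)\hookrightarrow\HH^i(Z/k,M)$.
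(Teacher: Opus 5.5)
Your proposal is correct and follows the paper's proof step by step: base-change $B_\bullet(Z)$ to $A^e$, use the chain map $r_\bullet$ covering $\mu\colon A\otimes_Z A^\op\to A$, identify $r_\bullet^*$ with restriction of cochains via adjunction, and conclude from $s^*\mu^*=\id$. Your observation that $A^e$ is projective over $Z^e$, because $A$ is finitely generated projective over $Z$, supplies the justification the paper leaves implicit for why $A^e\otimes_{Z^e}B_\bullet(Z)$ is actually a resolution of $A\otimes_Z A^\op$.
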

\begin{proof}
Let $\Lambda = A\otimes_Z A^\op$. The complex $P_\bullet := A^e\otimes_{Z^e} B_{\bullet}(Z/k)$ is a projective resolution of $A^e\otimes_{Z^e} Z \cong \Lambda$. The multiplication map $\mu: \Lambda \rightarrow A$ lifts to an $A^e$-chain map $r_\bullet: P_\bullet \rightarrow B_\bullet(A/k)$ given by
$$
r_n((a\otimes b^\op)\otimes z_0 \otimes \cdots \otimes z_{n+1}) = a z_0\otimes \cdots \otimes z_{n+1}b.
$$
By adjunction, we have an isomorphism of complexes
$$
\Hom_{A^e}(P_\bullet, M) \cong \Hom_{Z^e}(B_\bullet(Z/k), M) = C^\bullet(Z, M),
$$
which upon taking cohomology yields a canonical isomorphism $\Ext^i_{A^e}(\Lambda, M) \cong \HH^i(Z/k, M)$. 

Under this identification, the map induced by $r_\bullet^*$ on cohomology corresponds precisely to the restriction map 
$\HH^i(A/k, M) \rightarrow \HH^i(Z/k, M)$ and $r_\bullet^*$ computes the pullback $\mu^*: \Ext^i_{A^e}(A, M) \rightarrow \Ext^i_{A^e}(\Lambda, M)$
for $i\geq 0$.

Since $A$ is an Azumaya algebra, $\mu$ admits an $A^e$-module splitting $s$ such that $\mu s = \id_A$. Therefore $s^* \mu^* = \id$, making $\mu^*$—and consequently the restriction map—injective.
\end{proof}

\begin{remark}
Suppose $M$ is a $Z$-diagonal and $s(1) = \sum_i x_i \otimes y_i\in A \otimes_Z A^\op$. Then the map $e_M: M \rightarrow M$ given by $e_M(m) = \sum_i x_i m y_i$ 
is a well-defined $Z$-module projection onto $M^A = \{m\in M \mid a m = m a, \text{for every }a\in A\}$. Then one may prove that the restriction map, 
followed by the homomorphism induced by $e_M$, yields an 
isomorphism
$$
\HH^i(A/k, M) \xrightarrow{\sim} \HH^i(Z/k, M^A)
$$
on Hochschild cohomology. Let $\sigma$ be a lift of $s(1) = \sum_i x_i \otimes y_i\in A \otimes_Z A^\op$ in $A^e$, and let $e = s \mu$. Then $e$ is right multiplication by $\sigma$, and the induced chain map on $P_\bullet$ is
$$
e_n(u\otimes \xi) = u \sigma \otimes \xi.
$$
Under the adjunction map, precomposition with $e_n$ corresponds to postcomposition with $e_M$ on Hochschild cochains $C^n(Z, M)$. 
Since $e_M$ is a split idempotent with image $M^A$, the image of the induced 
endomorphism on $C^\bullet(Z, M)$ is the direct summand subcomplex $C^\bullet(Z, M^A)$. Passing to 
cohomology yields the claimed isomorphism.
\end{remark}

We now prove that a unital \emph{ring} endomorphism $f$ of an Azumaya algebra of constant rank over an \emph{arbitrary} commutative ring $Z$ satisfies 
$f(Z)\subseteq Z$. We have only been able to find this result in the literature for reduced rings and therefore give a complete proof below. 
First we need the following lemma.

\begin{lemma}\label{lemma:CSA}
Let $B$ be an Azumaya algebra of degree $n$ over a commutative ring $C$, let $k$ be a field, and let
$$
u:B\to S
$$
be a ring homomorphism into a central simple $k$-algebra $S$ of degree $n$. Then the $k$-subalgebra of $S$ generated by $u(B)$ is all of $S$.
\end{lemma}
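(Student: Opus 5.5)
Let $S'\subseteq S$ denote the $k$-subalgebra generated by $u(B)$. The plan is to show that $S'$ is itself central simple of degree $n$, hence all of $S$ by dimension count. Note first that $u$ is a ring homomorphism, not a $C$-algebra map, so $u(C)$ need not lie in $k$. However, $u(C)$ commutes with $u(B)$ because $C$ is central in $B$. It also commutes with $k$, which is central in $S$. Hence $u(C)$ lies in the center of $S'$.

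Let $K$ be the $k$-subalgebra of $S$ generated by $u(C)$. It is a commutative subalgebra of the finite-dimensional algebra $S$ and is central in $S'$. The map $u$ then induces a ring homomorphism
$$
\bar u: B\otimes_C K \longrightarrow S',
$$
where $C\to K$ is $u|_C$. This map is surjective, since its image is $k$-stable (as $k\subseteq K$) and contains $u(B)$.

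Next, $B\otimes_C K$ is an Azumaya algebra of degree $n$ over $K$, since base change preserves the Azumaya property and the degree. Two-sided ideals of an Azumaya algebra correspond to ideals of its center via $I\mapsto IB$. The kernel of $\bar u$ is therefore $J\cdot(B\otimes_C K)$ for some ideal $J\subseteq K$, and $J=0$ because $K\subseteq S'$ maps injectively. So $\bar u$ is an isomorphism, and $S'\cong B\otimes_C K$ is Azumaya of degree $n$ over $K$. In particular $S'$ is free of rank $n^2$ locally over $K$, and $K$ is the center of $S'$.

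Now $K$ is a finite-dimensional commutative $k$-algebra, so it is artinian, and $K\subseteq S$ commutes with $S'$. I expect the main obstacle to be turning this into a dimension count. The first thing I would try is to pick a maximal ideal $\mathfrak m$ of $K$, or better to work over each local factor. Then $\dim_k S' = n^2\dim_k K$. Because $S'$ is Azumaya over $K$, the double centralizer theorem can be applied: $S'$ is separable over $k$ if $K$ is, and in general I would reduce to $\bar k$. Over $\bar k$, $S\otimes\bar k\cong \Mat_n(\bar k)$ contains $S'\otimes\bar k$, an Azumaya algebra of degree $n$ over $K\otimes\bar k$.

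Choose a local factor $K_0$ of $K\otimes\bar k$ with idempotent $e$. Then $e(S'\otimes\bar k)$ contains $\Mat_n(K_0)$, a matrix algebra of degree $n$, inside the corner $e\Mat_n(\bar k)e$, which is isomorphic to $\Mat_r(\bar k)$ with $r=\operatorname{rank} e$. Embedding $\Mat_n(K_0/\mathfrak m_0)$ unitally into $\Mat_r$ forces $n\mid r$, so $r\ge n$. This gives $r=n$ and $e=1$. Thus $K\otimes\bar k$ is local with residue field $\bar k$, so $\Mat_n(K_0)$ sits unitally in $\Mat_n(\bar k)$. By dimension reasons the centralizer of $\Mat_n(\bar k)$ is $\bar k$, and $K_0$ lies in it, so $K_0=\bar k$. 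Hence $K=k$ and $\dim_k S'=n^2=\dim_k S$, which gives $S'=S$.
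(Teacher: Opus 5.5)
Your argument is correct, but at the key step it takes a different route from the paper. The paper passes to a simple quotient $S_0$ of the generated subalgebra $T$, so that its center $K=Z(S_0)$ is a \emph{field}. Then $B\otimes_C K$ is central simple over $K$, so injectivity of $B\otimes_C K\to S_0$ is just simplicity. This gives $\dim_k S_0=n^2[K:k]\ge n^2$, which contradicts $\dim_k S_0\le\dim_k T<n^2$.

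You instead keep the whole subalgebra $S'$ together with the merely artinian central subalgebra $K$ generated by $k$ and $u(C)$. You then use the stronger theorem that two-sided ideals of an Azumaya algebra over an arbitrary commutative ring are extended from its center. This costs a heavier input but gives more: the identification $S'\cong B\otimes_C K$ directly, with no contradiction argument and no choice of quotient.

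Note, though, that your proof is finished the moment you write $\dim_k S'=n^2\dim_k K$. Since $k\subseteq K$, you get $n^2\le n^2\dim_k K=\dim_k S'\le\dim_k S=n^2$, which forces $\dim_k K=1$ and $S'=S$. Everything after that is superfluous: the remark about double centralizers and separability, the passage to $\bar k$, and the local factors and corners $e\,\Mat_n(\bar k)\,e$. That material is not wrong. However, it leans on facts you do not need, such as the splitting of Azumaya algebras over artinian local rings with algebraically closed residue field. The separability remark plays no role at all.
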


\begin{proof}
Let $T$ be the $k$-subalgebra of $S$ generated by $u(B)$. Suppose $T\neq S$. Choose a simple quotient
$$
T\twoheadrightarrow S_0,
$$
and put $K=Z(S_0)$. The image of $C$ in $S_0$ is central, hence lies in $K$. Therefore the composite
$
B\to S_0
$
induces a $K$-algebra map
$$
\theta:B\otimes_C K\to S_0.
$$
Its image is the $K$-subalgebra generated by the image of $B$, hence all of $S_0$. 

Now $B\otimes_C K$ is central simple of degree $n$ over $K$, so a nonzero surjective homomorphism from it is an isomorphism. Thus
$$
\dim_k S_0=\dim_k(B\otimes_C K)=n^2[K:k]\ge n^2
$$
contradicting that $
\dim_k S_0<\dim_k S=n^2.
$
Hence $T=S$.
\end{proof}

\begin{proposition}\label{proposition:Azumaya}
Let $A$ be an Azumaya algebra of constant degree over a commutative ring $R$, and let
$$
f:A\to A
$$
be a ring endomorphism. Then $f(R)\subseteq R$.
\end{proposition}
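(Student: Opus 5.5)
The plan is to show that the $R$-subalgebra of $A$ generated by $f(A)$ is all of $A$. Once this is known, the conclusion is immediate. For $r\in R$ the element $f(r)$ commutes with every $f(a)$, since $r$ is central in $A$ and $f$ is a ring homomorphism. It also commutes with every element of $R$, since $R$ is the center. Hence $f(r)$ commutes with the $R$-subalgebra $T\subseteq A$ generated by $f(A)$, and if $T=A$ then $f(r)\in Z(A)=R$.

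To prove $T=A$, I would use Nakayama's lemma. Since $A$ is Azumaya, it is a finitely generated (projective) $R$-module, and $T$ is an $R$-submodule of $A$. It therefore suffices to show $T+\mathfrak m A=A$ for every maximal ideal $\mathfrak m$ of $R$. Indeed, then $(A/T)\otimes_R R_{\mathfrak m}$ is a finitely generated $R_{\mathfrak m}$-module $N$ with $N=\mathfrak m N$, hence zero, and so $A/T=0$.

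Fix a maximal ideal $\mathfrak m$ and put $k=R/\mathfrak m$ and $S=A\otimes_R k=A/\mathfrak m A$. Because $A$ is Azumaya of constant degree $n$, the algebra $S$ is central simple over $k$ of degree $n$. Consider the ring homomorphism
$$
u: A\xrightarrow{\ f\ } A \longrightarrow S .
$$
Apply Lemma \ref{lemma:CSA} with $B=A$ and $C=R$, which is legitimate because both $A$ and $S$ have degree $n$. It says that the $k$-subalgebra of $S$ generated by $u(A)$ is all of $S$. That $k$-subalgebra is exactly the image of $T$ in $S$, since $k$ is the image of $R$. Hence $T+\mathfrak m A=A$, as required.

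I expect the main point of care to be the interaction with nilpotents in $R$, and this route is chosen to avoid it. A more naive approach would show that each commutator $[f(r),a]$ vanishes in every fiber $A\otimes\kappa(\mathfrak q)$. That would only give $[f(r),a]\in \operatorname{nil}(R)A$, and so would prove the result only for reduced $R$. The subalgebra-plus-Nakayama argument sidesteps this, because surjectivity of $T\to A$ really can be checked on closed fibers, whereas vanishing of an element cannot.

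The remaining things to verify are routine:
\begin{enumerate}
\item Lemma \ref{lemma:CSA} applies even when $u$ is not $R$-linear. Its statement only requires $u$ to be a ring homomorphism, so this is fine.
\item The constant degree hypothesis is exactly what makes $\deg S=n=\deg A$ at every maximal ideal.
\end{enumerate}
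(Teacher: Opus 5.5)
Your proposal is correct and follows essentially the same argument as the paper. Your $R$-subalgebra $T$ generated by $f(A)$ coincides with the paper's $R$-submodule $N=\sum_{a\in A}Rf(a)$, since $R$ is central and $f(A)$ is a subring, and the paper likewise applies Lemma~\ref{lemma:CSA} to $A\xrightarrow{f}A\to A/\mathfrak m A$ for each maximal ideal and concludes by Nakayama; your explicit check that $T=A$ forces $f(r)$ to be central spells out a step the paper only asserts with ``it suffices to prove''.
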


\begin{proof}
Let
$$
N:=\sum_{a\in A} R\,f(a)\subseteq A
$$
be the $R$-submodule generated by $f(A)$. It suffices to prove that
$$
N=A.
$$
Fix a maximal ideal $\mathfrak m\subset R$, and set
$k(\mathfrak m)=R/\mathfrak m$ and  $S=A/\mathfrak m A$.
Since $A$ has constant degree $n$, $S$ is a central simple $k(\mathfrak m)$-algebra of degree $n$.
Consider the composite
$$
\psi_{\mathfrak m}:A\xrightarrow{f}A\to S.
$$
Let $H=\psi_{\mathfrak m}(A)\subseteq S$. The image of $N$ in $S$ is exactly the $k(\mathfrak m)$-span $k(\mathfrak m)H$. Since $H$ is a subring, 
$k(\mathfrak m)H$ is precisely the $k(\mathfrak m)$-subalgebra generated by $H$. By Lemma \ref{lemma:CSA},
$$
k(\mathfrak m)H=S.
$$
Hence the image of $N$ in $A/\mathfrak m A$ is all of $A/\mathfrak m A$, i.e.
$
N+\mathfrak m A=A.
$
Since $\mathfrak m$ was arbitrary, we have $(A/N)\otimes_R k(\mathfrak m)=0$ for every maximal ideal $\mathfrak m\subset R$.
As $A/N$ is a finitely generated $R$-module, it follows that $A/N=0$, and hence $N=A$.
\end{proof}

\begin{remark}
The constant-degree hypothesis is necessary. Indeed, let $A = \CC\times \Mat_2(\CC)$. Then the ring endomorphism 
given by 
$$
(a, M) \mapsto \left(a, \begin{pmatrix} a & 0 \\ 0 & \overline{a} \end{pmatrix}\right)
$$
maps the central element $(i, 0)$ to a non-central element.
\end{remark}

Finally we state and prove the main application and result of this paper.

\begin{theorem}\label{theorem:main}
Let $k$ be a field, $A$ a $k$-algebra, $f: A \rightarrow A$ a $k$-algebra endomorphism of $A$ and $\lA$ a first order flat lift of $A$. 
Assume that $A$ is an Azumaya algebra of constant rank over its center $Z$, and that $Z$ is formally smooth over $k$ in the category of commutative $k$-algebras. Then $f$ admits a lift to an endomorphism $\lf: \lA\rightarrow \lA$ if and only
if $f$ preserves the Poisson bracket on $Z$ induced by $\lA$.
\end{theorem}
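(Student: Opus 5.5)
The plan is to combine Proposition \ref{proposition:cocycle}, Proposition \ref{proposition:symmetric}, Lemma \ref{lemma:formallysmooth}, Lemma \ref{lemma:restriction} and Proposition \ref{proposition:Azumaya}. First I will note that, since $A$ is Azumaya of constant rank over $Z$ and $f$ is in particular a unital ring endomorphism, Proposition \ref{proposition:Azumaya} gives $f(Z)\subseteq Z$. This is needed for two reasons. The induced map $f\colon Z\to Z$ must make sense for the statement about Poisson brackets, and Proposition \ref{proposition:symmetric} needs it as a hypothesis.

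For the ``only if'' direction, suppose $\widehat f\colon \lA\to\lA$ is an $R$-algebra lift of $f$. Then $C_{\widehat f}=0$, so its restriction $c$ to $Z^{\otimes 2}$ is zero and in particular symmetric. Proposition \ref{proposition:symmetric} then gives $\{f(x),f(y)\}=f(\{x,y\})$ for $x,y\in Z$. One can also see this directly by applying $\widehat f$ to $\tilde x\tilde y-\tilde y\tilde x$ and noting that $\widehat f(\tilde x)$ lifts $f(x)\in Z$.

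For the ``if'' direction, choose any $R$-linear lift $\lf$ of $f$; one exists by the Remark, extending over the basis $\tilde e_i$. Let $M$ be $A$ with the twisted bimodule structure $a\cdot m\cdot b=f(a)mf(b)$. By Proposition \ref{proposition:cocycle} it suffices to show $[C_{\lf}]=0$ in $\HH^2(A/k,M)$. By Lemma \ref{lemma:restriction}, applied to the $A$-bimodule $M$, it is enough to show that the restriction $c$ of $C_{\lf}$ to $Z^{\otimes 2}$ is a coboundary in $C^\bullet(Z/k,M)$. Since $f$ preserves the bracket, Proposition \ref{proposition:symmetric} shows that $c$ is a symmetric $2$-cocycle. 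We then want to apply Lemma \ref{lemma:formallysmooth}, which requires $M$ to be a diagonal $Z$-bimodule, that is, $z\cdot m=m\cdot z$ for $z\in Z$. This holds because $z\cdot m=f(z)m$, $m\cdot z=mf(z)$, and $f(z)\in Z$ is central in $A$. Hence $c$ is a coboundary, so $[c]=0$ in $\HH^2(Z/k,M)$, and injectivity gives $[C_{\lf}]=0$. Proposition \ref{proposition:cocycle} then produces the multiplicative lift.

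I expect the only delicate point to be checking that the twisted bimodule $M$ is $Z$-diagonal, which is exactly where $f(Z)\subseteq Z$ enters. The rest is bookkeeping. In particular, Lemma \ref{lemma:restriction} is stated for an arbitrary $A$-bimodule, so it applies to the twisted $M$ without change. Formal smoothness of $Z$ is used only through Lemma \ref{lemma:formallysmooth}.
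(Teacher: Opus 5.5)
Your proposal is correct and follows the paper's proof essentially step for step. Proposition~\ref{proposition:Azumaya} gives $f(Z)\subseteq Z$, which makes the twisted $M$ $Z$-diagonal; Proposition~\ref{proposition:symmetric} then makes the restricted cocycle symmetric, Lemma~\ref{lemma:formallysmooth} makes it a coboundary, and the injectivity in Lemma~\ref{lemma:restriction} forces $[C_{\lf}]=0$. The only cosmetic difference is in the ``only if'' direction, where you get bracket preservation from $C_{\widehat f}=0$ via Proposition~\ref{proposition:symmetric}, while the paper uses the direct computation with $\widehat f([\tilde x,\tilde y])$ that you also mention.
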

\begin{proof}
By Proposition \ref{proposition:cocycle}, the endomorphism $f$ together with any $R$-linear lift 
$\lf$ of $f$ determines a Hochschild $2$-cocycle $C_{\lf} \in C^2(A/k, M)$, 
whose cohomology class $[C_{\lf}] \in \HH^2(A/k, M)$ vanishes if and only if 
$f$ admits a multiplicative lift. Recall here that $M = A$ with the 
bimodule structure $a m b = f(a) m f(b)$ for $a, b, m\in A$.

Suppose first that $f$ admits a multiplicative lift $\widehat{f}$. Then for $x,y\in Z$ and lifts $\tilde{x},\tilde{y}\in \lA$ we have
$$
f(\{x,y\}) = p^{-1}\bigl(\widehat{f}([\tilde{x},\tilde{y}])\bigr)
= p^{-1}\bigl([\widehat{f}(\tilde{x}),\widehat{f}(\tilde{y})]\bigr)
= \{f(x),f(y)\},
$$
so $f$ preserves the Poisson bracket on $Z$.

Suppose conversely that $f$ preserves the Poisson bracket on $Z$. Since $A$ is Azumaya of constant rank, Proposition \ref{proposition:Azumaya} gives $f(Z)\subseteq Z$.
Hence $M$, restricted to $Z$, is a diagonal $Z$-bimodule. Proposition~\ref{proposition:symmetric} then shows that the restriction of $C_{\lf}$ to $Z$ is a symmetric $2$-cocycle in $C^2(Z/k, M)$.
Because $Z$ is formally smooth, Lemma~\ref{lemma:formallysmooth} implies that this restricted cocycle is a coboundary.
Finally, the restriction map $\HH^2(A/k, M) \hookrightarrow \HH^2(Z/k, M)$ is injective by Lemma \ref{lemma:restriction}, so $[C_{\lf}] = 0$ in $\HH^2(A/k, M)$, and therefore $f$ lifts.
\end{proof}

\begin{remark}
Let $k$ be a perfect field of positive characteristic and let $A = A_n(k)$ be the $n$-th Weyl algebra over $k$. Then it follows from 
Theorem \ref{theorem:main}, that an endomorphism of $A$ lifts to $A_n(W_2(k))$ if and only if $f$ preserves the canonical Poisson bracket 
on the center of $A$. This is exactly \cite[Theorem 2.7]{LTL}.
\medskip

It would be interesting to formulate criteria for the existence of lifts from $W_m(k)$ to $W_{m+1}(k)$ for $m \geq 2$. However, the framework developed in this paper does not seem to extend directly to that setting. Already for $m=2$, \(A_1(W_2(k))\) is no longer an Azumaya algebra, so the present methods do not apply in any immediate way. Nevertheless, we expect that the techniques of \cite{LTL} can be adapted to treat this problem for the Weyl algebra.

For \(p=2\), the endomorphism
\[
x \mapsto x, \qquad \partial \mapsto \partial + x^3 \partial^4
\]
of \(A_1(k)\) lifts to \(W_2(k)\), but not to \(W_3(k)\). We expect that the general lifting problem from \(k = W_1(k)\) to \(W_m(k)\) is governed by subtler obstructions, and it seems plausible that the de Rham--Witt complex plays a role in describing them.

\end{remark}


\begin{bibdiv}
\begin{biblist}


\bib{BK}{article}{
    author={Belov-Kanel, A.},
    author={Kontsevich, M.},
    title={Automorphisms of the Weyl algebra},
    journal={Lett. Math. Phys.},
    volume={74},
    date={2005},
    pages={181--199},
}

\bib{BKJ}{article}{
    author={Belov-Kanel, A.},
    author={Kontsevich, M.},
    title={The Jacobian conjecture is stably equivalent to the Dixmier conjecture},
    journal={Mosc. Math. J.},
    volume={7},
    number={2},
    date={2007},
    pages={209--218},
}



\bib{KnusOjanguren}{book}{
    author={Knus, M.-A.},
    author={Ojanguren, M.},
    title={Th\'eorie de la descente et alg\`ebres d'Azumaya},
    series={Lecture Notes in Mathematics},
    volume={389},
    publisher={Springer-Verlag},
    address={Berlin-New York},
    date={1974},
    pages={viii+160},
}

\bib{LTL}{misc}{
    author={Lauritzen, N.},
    author={Thomsen, J. F.},
    title={Lifts of endomorphisms of Weyl algebras modulo $p^2$},
    note={\url{https://arxiv.org/abs/2601.23110}},
}


\bib{Ts}{article}{
    author={Tsuchimoto, Y.},
    title={Endomorphisms of Weyl algebra and $p$-curvatures},
    journal={Osaka J. Math.},
    volume={42},
    number={2},
    date={2005},
    pages={435--452},
}



\end{biblist}
\end{bibdiv}
\end{document}